\tikzset{cd/.style=matrix of math nodes,row sep=2em,column sep=2em, text height=1.5ex, text depth=0.5ex}
\tikzset{cdar/.style=->,auto}
\tikzset{mid/.style={anchor=mid}}
\newcommand*{\MRref}[2]{ \href{http://www.ams.org/mathscinet-getitem?mr=#1}{MR \textbf{#1}}}
\newcommand*{\arxiv}[1]{\href{http://www.arxiv.org/abs/#1}{arXiv: #1}}
\renewcommand{\PrintDOI}[1]{\href{http://dx.doi.org/\detokenize{#1}}{doi: \detokenize{#1}}%
  \IfEmptyBibField{pages}{, (to appear in print)}{}}
\theoremstyle{plain}
\newtheorem{theorem}[equation]{Theorem}
\newtheorem{lemma}[equation]{Lemma}
\newtheorem{corollary}[equation]{Corollary}
\theoremstyle{definition}
\newtheorem{definition}[equation]{Definition}
\theoremstyle{remark}
\newtheorem{remark}[equation]{Remark}
\newcommand*{\Z}{\mathbb Z}
\newcommand*{\trans}{\textup{t}}
\newcommand*{\FK}{\textup{FK}}
\newcommand*{\KK}{\textup{KK}}
\newcommand*{\K}{\textup{K}}
\newcommand*{\Cstar}{\texorpdfstring{$\textup C^*$\nb-}{C*-}}
\newcommand*{\UNIT}{\mathds{1}}
\newcommand*{\nb}{\nobreakdash}  
\DeclareMathOperator{\Ext}{Ext}
\DeclareMathOperator{\Coker}{Coker}
\DeclareMathOperator{\Prim}{Prim}
\DeclareMathOperator{\Ker}{Ker}
\newcommand*{\KKcat}{\mathfrak{KK}}
\newcommand*{\defeq}{\mathrel{\vcentcolon=}}
\newcommand{\Asf}{\mathsf{A}}
\newcommand{\Nsf}{\mathsf{N}}
\newcommand*{\Cuntz}{\mathcal O}
\newcommand*{\Acal}{\mathcal A}
\newcommand*{\Bcal}{\mathcal B}
\newcommand*{\Ecal}{\mathcal E}
\newcommand*{\Comp}{\mathcal K}
\newcommand*{\Ical}{\mathcal I}
\begin{document}

\title{Extensions of Cuntz--Krieger algebras}

\author{Rasmus Bentmann}
\email{rasmus.bentmann@mathematik.uni-goettingen.de}

\address{Mathematisches Institut\\
  Georg-August Universit\"at G\"ottingen\\
  Bunsenstra\ss{}e 3--5\\
  37073 G\"ottingen\\
  Germany}

\begin{abstract}
We show that a \Cstar{}algebra ``looking like'' a Cuntz--Krieger algebra is a Cuntz--Krieger algebra.  This implies that, in an appropriate sense, the class of Cuntz--Krieger algebras is closed under extensions of real rank zero.
\end{abstract}

\thanks{This work was supported by the German Research Foundation (DFG) through the project grant ``Classification of non-simple purely infinite \Cstar{}algebras''}

\subjclass[2010]{Primary 46L35; Secondary 19K33, 19K35, 46L55}

\keywords{Cuntz--Krieger algebras, extensions, \Cstar{}algebras, classification}

\maketitle

\section{Introduction and statement of results}
\label{sec:intro}

The Cuntz--Krieger algebras introduced around 1980 in~\cites{Cuntz-Krieger:topological_Markov_chains, Cuntz:topological_Markov_chains_II} form a class of purely infinite \Cstar{}algebras (not necessarily simple) with close ties to symbolic dynamics containing Cuntz's algebras~$\Cuntz_n$ from~\cite{Cuntz:O_n}.

It was shown in~\cites{Cuntz:topological_Markov_chains_II} that a quotient of a Cuntz--Krieger algebra is again a Cuntz--Krieger algebra and that an ideal in a Cuntz--Krieger algebra is stably isomorphic to a Cuntz--Krieger algebra. The original works~\cites{Cuntz-Krieger:topological_Markov_chains, Cuntz:topological_Markov_chains_II} also contain computations of $\Ext$\nb-groups of Cuntz--Krieger algebras. However, the following fundamental question has remained open: \emph{when is an extension of Cuntz--Krieger algebras again a Cuntz--Krieger algebra?} (Or, to be more precise, when is an extension of a Cuntz--Krieger algebra by a \Cstar{}algebra stably isomorphic to a Cuntz--Krieger algebra again a Cuntz--Krieger algebra?) We shall see that this is the case if and only if the extension is unital and has real rank zero.

In the light of Kirchberg's classification theorem for certain non-simple purely infinite \Cstar{}algebras~\cite{Kirchberg:Michael}, one may even feel encouraged to ask: \emph{which unital \Cstar{}algebras are Cuntz--Krieger algebras?} In fact, once the simple case is settled, this question is basically equivalent to the first one. We shall see that it is indeed possible to characterize the class of Cuntz--Krieger algebras solely by \Cstar{}algebraic properties, as suggested by the following definition.

\begin{definition}[confer \cite{Arklint:PhantomCK}*{Definition~1.1}]
  \label{def:looks_like}
A \Cstar{}algebra $\Acal$ \emph{looks stably like a Cuntz--Krieger algebra} if $\Acal$ is separable, nuclear and $\Cuntz_\infty$\nb-absorbing, $\Acal$~has real rank zero and finitely many ideals, and every simple subquotient of~$\Acal$ satisfies the universal coefficient theorem and has a finitely generated, free $\K_1$\nb-group of the same rank as its $\K_0$\nb-group.  A \Cstar{}algebra $\Acal$ \emph{looks like a Cuntz--Krieger algebra} if it is unital and looks stably like a Cuntz--Krieger algebra.
\end{definition}

As the definition indicates, we only consider Cuntz--Krieger algebras of real rank zero, that is, Cuntz--Krieger algebras~$\Cuntz_\Asf$ where $\Asf$ is a $\{0,1\}$\nb-valued square matrix~$\Asf$ satisfying Cuntz's Condition~\textup{(II)} (see \cite{Cuntz:topological_Markov_chains_II}*{Page 29}; Cuntz--Krieger algebras of non-zero matrices that do not satisfy Condition~(II) have uncountably many ideals and positive real rank). Notice that every Cuntz--Krieger algebra looks like a Cuntz--Krieger algebra. We prove the converse below.

In the terminology of \cite{Arklint:PhantomCK}, the theorem says that \emph{no phantom Cuntz--Krieger algebras exist}. This generalizes several results established in \cites{Arklint:PhantomCK,Arklint-Bentmann-Katsura:Reduction,Arklint-Bentmann-Katsura:Range,Bentmann:Real_rank_zero_and_int_cancellation,Eilers-Katsura-Tomforde-West:Range_nonsimple_graph}, where the same conclusion was reached under various additional assumptions on the ideal lattice or the $\K$\nb-theory of the \Cstar{}algebra~$\Acal$. These partial solutions to the problem employed complete classification invariants for purely infinite nuclear \Cstar{}algebras in the considered class together with a description of the range of the invariant on the Cuntz--Krieger algebras in the class. In the present paper, the strategy is rather to inductively construct a $\KK(X)$-equivalent Cuntz--Krieger algebra without using an invariant.

\begin{theorem}
  \label{thm:no_phantoms}
Let $\Acal$ be a \Cstar{}algebra that looks \textup{(}stably\textup{)} like a Cuntz--Krieger algebra.  Then $\Acal$ is \textup{(}stably\textup{)} isomorphic to a Cuntz--Krieger algebra.
\end{theorem}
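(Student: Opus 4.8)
The plan is to reduce the statement, via Kirchberg's classification theorem for strongly purely infinite \Cstar{}algebras over a finite topological space, to the construction of a Cuntz--Krieger algebra in the correct $\KK(X)$\nb-class, and then to build that algebra by an induction along the ideal lattice.

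Set $X\defeq\Prim(\Acal)$; as $\Acal$ has finitely many ideals this is a finite $T_0$\nb-space, and $\Acal$ is tight over $X$ by construction. Being separable, nuclear and $\Cuntz_\infty$\nb-absorbing, $\Acal$ is strongly purely infinite, and since every simple subquotient satisfies the universal coefficient theorem, $\Acal$ lies in the bootstrap class over $X$. Kirchberg's theorem then reduces the problem to producing a Cuntz--Krieger algebra $\Cuntz_{\mathsf B}$ with $\Prim(\Cuntz_{\mathsf B})$ homeomorphic to $X$ and a $\KK(X)$\nb-equivalence $\Acal\otimes\Comp\sim\Cuntz_{\mathsf B}\otimes\Comp$; for the unital statement one additionally requires the equivalence to carry the $\K_0$\nb-class of $1_\Acal$ to that of $1_{\Cuntz_{\mathsf B}}$ and invokes the unital version of Kirchberg's theorem. (That some Cuntz--Krieger algebra has primitive ideal space $X$ is known from the range computations for graph \Cstar{}algebras, so the ideal lattice is no obstruction in itself.)

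I would then induct on the number of points of $X$. If $\lvert X\rvert=1$ then $\Acal$ is simple, hence a Kirchberg algebra in the UCT class whose $\K_1$\nb-group is free of finite rank equal to the rank of its $\K_0$\nb-group; the simple case of the theorem, stable and unital, is already known, being a consequence of the Kirchberg--Phillips theorem together with the description of the range of the invariant on simple Cuntz--Krieger algebras due to Cuntz and R\o{}rdam. For $\lvert X\rvert\ge 2$, choose a closed point $x\in X$ (such points exist), so that $\Ical\defeq\Acal\bigl(X\setminus\{x\}\bigr)$ is an ideal with $\Prim(\Ical)=X\setminus\{x\}$ and $\mathcal Q\defeq\Acal/\Ical$ is simple. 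Real rank zero, $\Cuntz_\infty$\nb-absorption, nuclearity, separability and finiteness of the ideal lattice all pass to ideals and quotients, and the simple subquotients of $\Ical$ and of $\mathcal Q$ occur among those of $\Acal$; hence both again look stably like Cuntz--Krieger algebras. By the inductive hypothesis and the simple case we obtain Cuntz--Krieger algebras $\Cuntz_{\mathsf C}$ and $\Cuntz_{\mathsf D}$ together with a $\KK(X\setminus\{x\})$\nb-equivalence $\Cuntz_{\mathsf C}\otimes\Comp\sim\Ical\otimes\Comp$ and a $\KK$\nb-equivalence $\Cuntz_{\mathsf D}\otimes\Comp\sim\mathcal Q\otimes\Comp$.

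It remains to glue $\Cuntz_{\mathsf C}$ and $\Cuntz_{\mathsf D}$ into a single Cuntz--Krieger algebra realising the extension $\Ical\into\Acal\prto\mathcal Q$ up to $\KK(X)$\nb-equivalence. Representing $\Cuntz_{\mathsf C}$ and $\Cuntz_{\mathsf D}$ by finite graphs with Condition~(K), I would form the graph on the disjoint union of the two vertex sets, keeping all old edges and adding edges from the $\Cuntz_{\mathsf D}$\nb-vertices to the $\Cuntz_{\mathsf C}$\nb-vertices; choosing the connecting edges so as to reach the appropriate hereditary saturated subsets makes the $\Cuntz_{\mathsf C}$\nb-vertices hereditary and saturated, keeps Condition~(K), yields primitive ideal space $X$, and produces a graph \Cstar{}algebra that is an extension with ideal stably isomorphic to $\Cuntz_{\mathsf C}$ and quotient $\Cuntz_{\mathsf D}$. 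The matrix of connecting edges governs the boundary maps of the six-term sequence and hence the $\KK(X)$\nb-class. The heart of the proof — and the step I expect to be the main obstacle — is to show that, after first replacing $\Cuntz_{\mathsf C}$ and $\Cuntz_{\mathsf D}$ by stably isomorphic Cuntz--Krieger algebras through graph moves that enlarge the vertex sets (so as to present the relevant $\K$\nb-groups by suitably free modules), the connecting edges can be chosen so that the glued $X$\nb-algebra is $\KK(X)$\nb-equivalent to $\Acal$; equivalently, so that the class of this extension in the pertinent $\Ext^1$\nb-group coincides, under the chosen equivalences, with that of $\Acal$. This is exactly where the hypotheses packaged into ``$\Acal$ looks like a Cuntz--Krieger algebra'' enter: the freeness of the $\K_1$\nb-groups together with the equality of the $\K_0$\nb- and $\K_1$\nb-ranks forces the relevant $\K$\nb-theoretic data of $\Acal$ to admit a length-one resolution of the ``geometric'' kind that a finite graph produces, which makes the extension class one must hit realisable by connecting edges and, via the universal coefficient spectral sequence (degenerating in projective dimension one), makes such a realisation already suffice for $\KK(X)$\nb-equivalence rather than merely for agreement of filtered $\K$\nb-theory. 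For the unital statement one carries the $\K_0$\nb-class of the unit along through the induction, exploiting the freedom in the simple-case range results and in the graph moves, and concludes with the unital form of Kirchberg's theorem.
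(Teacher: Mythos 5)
Your overall strategy---induction on the number of primitive ideals, peeling off a closed point $x$, realising $\Acal(X\setminus\{x\})$ and the simple quotient by Cuntz--Krieger algebras via the inductive hypothesis, gluing them by a block-triangular matrix whose off-diagonal block is chosen to realise the connecting map of the extension, and concluding with Kirchberg's classification theorem over $X$---is exactly the paper's. The base case is handled the same way. One structural difference: the paper closes the argument not with any universal coefficient spectral sequence over $X$ (which is delicate for general finite spaces and not needed here) but with a morphism of exact triangles and the Five Lemma for triangulated categories; once the two connecting morphisms agree under the chosen equivalences of the outer terms, the axioms produce a fill-in map $\Acal\to\Cuntz_{\Asf}$ that is automatically a $\KK(X)$-equivalence.

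The genuine gap sits at the step you yourself flag as the heart of the proof, and your proposed mechanism for it is not the right one. The classes in $\KK_1(\Cuntz_{\Asf_1},\Cuntz_{\Asf_2'})\cong\Ext(\Cuntz_{\Asf_1},\Cuntz_{\Asf_2'})$ realised by block-triangular matrix extensions form a \emph{proper} subgroup, computed by Cuntz to be isomorphic to $\Ext(\Cuntz_{\Asf_1})\otimes\K_0(\Cuntz_{\Asf_2'})\cong\Coker(\Asf_1-\UNIT)\otimes\Coker({\Asf_2'}^\trans-\UNIT)$; in particular every such extension has vanishing exponential map. Freeness of the $\K_1$-groups and equality of ranks do not place the class coming from $\Acal$ in this subgroup---what does is real rank zero of $\Acal$, which forces the exponential map $\K_0(\Cuntz_{\Asf_1})\to\K_1(\Cuntz_{\Asf_2'})$ to vanish, whence Cuntz's Theorem~3.11 applies. (The freeness and rank hypotheses are consumed in the simple base case, not here.) Moreover, you must realise the class by a connecting matrix with \emph{positive} entries, since otherwise the glued graph need not have primitive ideal space $X$; the paper arranges this by first choosing $\Asf_1$ with all diagonal entries at least $2$, so that the image of $\Asf_1-\UNIT$ contains a strictly positive vector and hence the kernel of the projection $\Z^{m_1}\otimes\Z^{m_2'}\twoheadrightarrow\Coker(\Asf_1-\UNIT)\otimes\Coker({\Asf_2'}^\trans-\UNIT)$ contains a strictly positive matrix. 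Without these two points the ``choice of connecting edges'' cannot be completed, so as written the proposal does not go through.
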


The theorem is proved in~\S\ref{sec:proof}. The desired result about extensions is now a simple consequence.

\begin{corollary}
Let $\Ical$ be stably isomorphic to a Cuntz--Krieger algebra and let $\Bcal$ be a Cuntz--Krieger algebra. Let $\Ical\rightarrowtail\Acal\twoheadrightarrow\Bcal$ be an extension of \Cstar{}algebras such that $\Acal$~is unital. The following conditions are equivalent:
 \begin{enumerate}[label=\textup{(\arabic{*})}]
  \item $\Acal$ is a Cuntz--Krieger algebra;
  \item $\Acal$ has real rank zero;
  \item the exponential map $\K_0(\Bcal)\to\K_1(\Ical)$ of the extension vanishes.
 \end{enumerate}
\end{corollary}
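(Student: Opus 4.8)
The plan is to prove the cycle of implications $(1)\Rightarrow(2)\Rightarrow(3)\Rightarrow(1)$, deducing the last one from Theorem~\ref{thm:no_phantoms}. For $(1)\Rightarrow(2)$, recall that a Cuntz--Krieger algebra of a nonzero matrix has real rank zero if and only if it has finitely many ideals, equivalently if and only if the matrix satisfies Cuntz's Condition~(II). Now $\Bcal$ has finitely many ideals by Condition~(II), and $\Ical$ has finitely many ideals because it is stably isomorphic to a Cuntz--Krieger algebra and the ideal lattice is a stable-isomorphism invariant; since the primitive ideal space of the extension $\Acal$ is the set-theoretic union of those of $\Ical$ and $\Bcal$, it follows that $\Acal$, too, has finitely many ideals. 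Hence, if $\Acal$ is a Cuntz--Krieger algebra, then $\Acal$ has real rank zero.

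For $(2)\Rightarrow(3)$, the key point is that when $\Acal$ has real rank zero, every projection in $\Bcal$ --- and, by the same argument over matrix amplifications, in every $M_n(\Bcal)$ --- lifts to a projection in $\Acal$ (resp.\ $M_n(\Acal)$). Indeed, writing $q\colon\Acal\prto\Bcal$ for the quotient map, the preimage $q^{-1}(p\Bcal p)$ of the corner by a projection $p\in\Bcal$ is a separable hereditary \Cstar{}subalgebra of $\Acal$, hence has real rank zero, hence carries an approximate unit of projections; its image is an approximate unit of the unital algebra $p\Bcal p$, so it is eventually a projection unitarily equivalent to $p$, and conjugating by a unitary of $\widetilde\Acal$ lifting the implementing unitary yields a projection in $\Acal$ over $p$. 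Since $\Acal$ is unital, every class in $\K_0(\Bcal)$ has the form $[r]-[1_m]$ with $r$ a projection over $\Bcal$; lifting $r$ and $1_m$ shows that $q_*\colon\K_0(\Acal)\to\K_0(\Bcal)$ is surjective, and then exactness of the six-term sequence forces the exponential map $\K_0(\Bcal)\to\K_1(\Ical)$ to vanish.

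For $(3)\Rightarrow(1)$, I would verify that $\Acal$ looks like a Cuntz--Krieger algebra in the sense of Definition~\ref{def:looks_like} and then invoke Theorem~\ref{thm:no_phantoms}. Unitality is assumed; separability, nuclearity and finiteness of the ideal lattice pass from $\Ical$ and $\Bcal$ to the extension $\Acal$; and, since $\Ical$ and $\Bcal$ are $\Cuntz_\infty$\nb-absorbing (being stably isomorphic to, respectively equal to, a Cuntz--Krieger algebra), so is $\Acal$, by the permanence of $\Cuntz_\infty$\nb-absorption under extensions of separable nuclear \Cstar{}algebras (Kirchberg and R\o{}rdam). A short argument with the isomorphism theorems shows that every simple subquotient of $\Acal$ is isomorphic to a simple subquotient of $\Ical$ or of $\Bcal$; since $\Ical$ looks stably like a Cuntz--Krieger algebra and $\Bcal$ looks like one, each such subquotient satisfies the universal coefficient theorem and has a finitely generated free $\K_1$\nb-group of the same rank as its $\K_0$\nb-group. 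Finally, $\Ical$ and $\Bcal$ have real rank zero --- the former because real rank zero is a stable-isomorphism invariant, the latter by Condition~(II) --- and by~(3) the exponential map of the extension vanishes, so $\Acal$ has real rank zero by the characterization of real rank zero for extensions of real-rank-zero \Cstar{}algebras due to Brown and Pedersen. Thus $\Acal$ looks like a Cuntz--Krieger algebra, and Theorem~\ref{thm:no_phantoms} shows that $\Acal$ is isomorphic to a Cuntz--Krieger algebra.

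The step I expect to be the main obstacle is the real-rank-zero input used in $(3)\Rightarrow(1)$, namely that an extension $\Ical\into\Acal\prto\Bcal$ with $\Ical$ and $\Bcal$ of real rank zero and with vanishing exponential map is itself of real rank zero; this is not formal, but it is a known result. The remaining ingredients --- the permanence of nuclearity, finiteness of the ideal lattice and $\Cuntz_\infty$\nb-absorption under extensions, and the description of the simple subquotients --- are routine, provided one keeps in mind the convention, in force throughout the paper, that ``Cuntz--Krieger algebra'' refers to one of real rank zero, so that $\Ical$ and $\Bcal$ genuinely have finite ideal lattices.
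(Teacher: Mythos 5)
Your proposal follows essentially the same route as the paper: establish that $\Acal$ inherits separability, nuclearity and $\Cuntz_\infty$\nb-absorption from $\Ical$ and $\Bcal$ (the paper cites \cite{Toms-Winter:Strongly_self-abs}*{Theorem~4.3} for the last point), deduce the equivalence of (1) and (2) from Theorem~\ref{thm:no_phantoms} after checking that $\Acal$ looks like a Cuntz--Krieger algebra, and identify real rank zero with the vanishing of the exponential map. The paper compresses the latter equivalence into a single citation of \cite{Pasnicu-Rordam:Purely_inf_RR0}*{Theorem~4.2}; your hands-on derivation of (2)$\Rightarrow$(3) via lifting projections from the quotient of a real-rank-zero extension is a correct unpacking of the easy direction of that result.

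The one point you must repair is the justification of the implication from (3) to real rank zero inside your proof of (3)$\Rightarrow$(1). The statement you attribute to Brown and Pedersen --- that an extension of real-rank-zero \Cstar{}algebras with vanishing exponential map again has real rank zero --- is not their theorem and is not true in that generality: what Brown--Pedersen prove is that such an extension has real rank zero if and only if projections in all matrix amplifications of the quotient lift to projections in the extension, and passing from the $\K$\nb-theoretic condition to actual projection lifting requires additional structure on the ideal. In the present setting that structure is precisely the $\Cuntz_\infty$\nb-absorption (hence pure infiniteness) you have already established, and the correct reference is \cite{Pasnicu-Rordam:Purely_inf_RR0}*{Theorem~4.2}, which characterizes real rank zero of such purely infinite algebras by $\K_0$\nb-liftability; since $\Ical$ and $\Bcal$ already have real rank zero, this reduces to the single exponential map $\K_0(\Bcal)\to\K_1(\Ical)$. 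With that citation substituted, your argument is complete and agrees in substance with the paper's.
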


\begin{proof}
As an extension of Cuntz--Krieger algebras, $\Acal$ is again separable, nuclear and $\Cuntz_\infty$\nb-absorbing (by \cite{Toms-Winter:Strongly_self-abs}*{Theorem~4.3}). Hence (1)$\Leftrightarrow$(2) follows from Theorem~\ref{thm:no_phantoms} and (2)$\Leftrightarrow$(3) follows from \cite{Pasnicu-Rordam:Purely_inf_RR0}*{Theorem~4.2}.
\end{proof}

\emph{A posteriori} and based on Restorff's classification theorem, we also obtain an ``ex\-ter\-nal'' classification result for Cuntz--Krieger algebras. By~$\FK$ we denote \emph{reduced filtered $\K$\nb-theory} as defined in~\cite{Restorff:Classification}*{Definition~4.1}.

\begin{corollary}
Let $\Acal$ be \textup{(}stably isomorphic to\textup{)} a Cuntz--Krieger algebra and let $\Bcal$ be a separable nuclear $\Cuntz_\infty$\nb-absorbing \Cstar{}algebra such that $\Prim(\Acal)\cong\Prim(\Bcal)$ and $\FK(\Acal)\cong\FK(\Bcal)$. Assume that every simple subquotient of~$\Bcal$ satisfies the universal coefficient theorem. Then $\Acal$ is stably isomorphic to~$\Bcal$.
\end{corollary}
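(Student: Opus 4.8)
The plan is to verify that $\Bcal$, like $\Acal$, looks stably like a Cuntz--Krieger algebra, to invoke Theorem~\ref{thm:no_phantoms} in order to realise $\Bcal$ up to stable isomorphism by an honest Cuntz--Krieger algebra, and then to conclude with Restorff's classification theorem.

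First I would replace $\Acal$ by a Cuntz--Krieger algebra stably isomorphic to it; this changes neither $\Prim(\Acal)$ nor $\FK(\Acal)$ nor the assertion to be proved, since both invariants and the relation of stable isomorphism are insensitive to stabilisation. Fix a homeomorphism $\Prim(\Acal)\cong\Prim(\Bcal)$ implementing the isomorphism $\FK(\Acal)\cong\FK(\Bcal)$ and regard both \Cstar{}algebras over the resulting finite $T_0$\nb-space~$X$. Then $\Bcal$ has only finitely many ideals, as these correspond to the finitely many open subsets of~$X$; it is separable, nuclear and $\Cuntz_\infty$\nb-absorbing with simple subquotients satisfying the universal coefficient theorem by hypothesis; and the $\K$\nb-theoretic condition on simple subquotients transfers from~$\Acal$, since the simple subquotients of~$\Bcal$ are the distinguished subquotients $\Bcal(\{x\})$, $x\in X$, whose $\K$\nb-groups are determined by $\FK(\Bcal)\cong\FK(\Acal)$. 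It remains to check that $\Bcal$ has real rank zero. Being separable, nuclear and $\Cuntz_\infty$\nb-absorbing, $\Bcal$ is purely infinite, and $\Prim(\Bcal)$ has a basis of compact-open sets since it is finite; by \cite{Pasnicu-Rordam:Purely_inf_RR0}*{Theorem~4.2}, real rank zero is then equivalent to the vanishing of the exponential maps of the canonical extensions between the ideals of~$\Bcal$. These exponential maps are among the structure maps of filtered $\K$\nb-theory, so the isomorphism $\FK(\Acal)\cong\FK(\Bcal)$ carries this vanishing from the Cuntz--Krieger algebra~$\Acal$ over to~$\Bcal$. Hence $\Bcal$ looks stably like a Cuntz--Krieger algebra.

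Now Theorem~\ref{thm:no_phantoms} applies to~$\Bcal$, so $\Bcal$ is stably isomorphic to a Cuntz--Krieger algebra~$\Cuntz_\Bsf$. Writing $\Cuntz_\Asf$ for the Cuntz--Krieger algebra stably isomorphic to~$\Acal$ chosen above, we obtain $\FK(\Cuntz_\Asf)\cong\FK(\Bcal)\cong\FK(\Cuntz_\Bsf)$ compatibly with $\Prim(\Cuntz_\Asf)\cong\Prim(\Cuntz_\Bsf)$, using that $\FK$ and $\Prim$ are stable invariants. Restorff's classification theorem~\cite{Restorff:Classification} then gives $\Cuntz_\Asf\otimes\Comp\cong\Cuntz_\Bsf\otimes\Comp$, i.e.\ $\Acal$ is stably isomorphic to~$\Bcal$. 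Routing the argument through Theorem~\ref{thm:no_phantoms} seems essential here: filtered $\K$\nb-theory is a complete invariant not for purely infinite nuclear \Cstar{}algebras over an arbitrary finite space, but only over the spaces underlying Cuntz--Krieger algebras, and it is precisely Theorem~\ref{thm:no_phantoms} that moves~$\Bcal$ into that setting.

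The step I expect to be the main obstacle is the verification of real rank zero for~$\Bcal$: one must be sure that the Pasnicu--R\o rdam characterisation of real rank zero among purely infinite \Cstar{}algebras with finitely many ideals is expressed entirely in terms of data retained by (reduced) filtered $\K$\nb-theory, in particular that every exponential map needed — not merely those attached to the most elementary subquotient extensions — can be recovered from $\FK(\Bcal)$.
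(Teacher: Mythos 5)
Your overall route is the same as the paper's: check that $\Bcal$ looks stably like a Cuntz--Krieger algebra, apply Theorem~\ref{thm:no_phantoms} to replace $\Bcal$ by a genuine Cuntz--Krieger algebra up to stable isomorphism, and finish with Restorff's classification theorem. The verification of all the conditions except real rank zero is fine.

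The one genuine gap is exactly the step you flag yourself: the assertion that the exponential maps governing real rank zero ``are among the structure maps of filtered $\K$\nb-theory.'' The invariant $\FK$ in this statement is Restorff's \emph{reduced} filtered $\K$\nb-theory, which records only the six-term sequences attached to a restricted family of subquotients; the Pasnicu--R\o{}rdam criterion ($\K_0$\nb-liftability, \cite{Pasnicu-Rordam:Purely_inf_RR0}*{Definition~3.1 and Theorem~4.2}) requires the vanishing of exponential maps for \emph{all} pairs of ideals, and these are not literally components of the reduced invariant. So the transfer of real rank zero from $\Acal$ to $\Bcal$ along $\FK(\Acal)\cong\FK(\Bcal)$ does not follow by mere inspection of the structure maps; it needs an argument that $\K_0$\nb-liftability is an invariant of reduced filtered $\K$\nb-theory. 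The paper closes this gap by citing \cite{Bentmann:Real_rank_zero_and_int_cancellation}*{Corollary~3.6}, which establishes precisely that. With that reference supplied, your argument becomes the paper's proof; without it, the key step remains unproved.
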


\begin{proof}
\cite{Pasnicu-Rordam:Purely_inf_RR0}*{Theorem~4.2} shows that $\Acal$ is $\K_0$\nb-liftable in the sense of \cite{Pasnicu-Rordam:Purely_inf_RR0}*{Definition~3.1}. Hence $\Bcal$ is $\K_0$\nb-liftable because of $\FK(\Acal)\cong\FK(\Bcal)$ and \cite{Bentmann:Real_rank_zero_and_int_cancellation}*{Corollary~3.6}. Thus $\Bcal$ has real rank zero by \cite{Pasnicu-Rordam:Purely_inf_RR0}*{Theorem~4.2}. This shows that $\Bcal$ looks stably like a Cuntz--Krieger algebra. Theorem~\ref{thm:no_phantoms} implies that $\Bcal$ is stably isomorphic to a Cuntz--Krieger algebra. Thus $\Acal$ is stably isomorphic to~$\Bcal$ by~\cite{Restorff:Classification}*{Theorem~4.2}.
\end{proof}

Cuntz--Krieger algebras are a special case of graph \Cstar{}algebras (see~\cite{Raeburn:Graph_algebras}). The extent to which the methods and results in this article generalize to (purely infinite) graph \Cstar{}algebras shall be determined elsewhere.

\subsection*{Acknowledgement}
I am grateful to S\o{}ren Eilers for promoting the questions answered in this article and to Ralf Meyer for valuable discussions.

\section{Proof of Theorem~\ref{thm:no_phantoms}}
\label{sec:proof}

\begin{remark}
The proof makes use of the formalism of \Cstar{}algebras over topological spaces from~\cite{Meyer-Nest:Bootstrap}*{\S2} and a version of Kasparov theory for these, including its structure as a triangulated category~\cite{Meyer-Nest:Bootstrap}*{\S3}; we adopt the notation from~\cite{Meyer-Nest:Bootstrap}, in particular for the extension functor~$i_Y^X$, the restriction functor~$r_X^Y$ and the projection functor~$P_Y\defeq i_Y^X\circ r_X^Y$ for a locally closed subset~$Y$ of a space~$X$ introduced in~\cite{Meyer-Nest:Bootstrap}*{Definitions~2.18, 2.19 and~4.2}.

We will use various results from~\cites{Cuntz-Krieger:topological_Markov_chains, Cuntz:topological_Markov_chains_II}. There are two points we need to address to ascertain the applicability of these results in our setting.

Firstly, it will be more convenient for us to use matrices~$\Asf$ with non-negative entries (rather than entries in $\{0,1\}$) satisfying Condtion~(K) (the appropriate generalization of Condtion~(II); see~\cite{Kumjian-Pask-Raeburn:Cuntz-Krieger_graphs}). The \Cstar{}algebra~$\Cuntz_\Asf$ is then still defined, and the resulting class of \Cstar{}algebras remains the same. Indeed, one can turn every non-negative integer-valued matrix~$\Asf$ into a $\{0,1\}$-matrix without changing the isomorphism class of the associated \Cstar{}algebra using a sequence of so-called out-split moves (see \cite{Bates-Pask:Flow_eq_for_graph_algs}*{Theorem~3.2}) or by replacing~$\Asf$ with the edge matrix of a graph whose adjacency matrix is~$\Asf$ (see \cite{Kumjian-Pask-Raeburn-Renault:Graphs}*{\S4}). This observation also provides a receipt for checking that a given result proved for Cuntz--Krieger algebras of $\{0,1\}$-matrices generalizes to Cuntz--Krieger algebras of non-negative matrices.

Secondly, on \cite{Cuntz:topological_Markov_chains_II}*{Page~38}, an assumption on~$\Asf$ is introduced which guarantees that $\Cuntz_\Asf$ has a unique non-zero proper ideal, so that the pair $\left[\Ecal_{\Nsf}\right]$, $\Ext(\Cuntz_{\Asf_1})\otimes\K_0(\Cuntz_{\Asf_2})$ is an invariant of the plain \Cstar{}algebra~$\Cuntz_{\Asf}$ because the extension $\Comp\otimes\Cuntz_{\Asf_2}\rightarrowtail\Cuntz_{\Asf}\twoheadrightarrow\Cuntz_{\Asf_1}$ is uniquely determined. However, without this assumption, the computations in the proof of \cite{Cuntz:topological_Markov_chains_II}*{Proposition~4.4} still go through with minor notational modifications due to the fact that the ideal need no longer be stable.
\end{remark}

Let us now begin with the proof of Theorem~\ref{thm:no_phantoms}. We proceed by induction on the number~$n$ of primitive ideals in~$\Acal$. The result for $n=1$ follows from a version of Szyma\'nski's theorem \cite{Eilers-Katsura-Tomforde-West:Range_nonsimple_graph}*{Proposition~3.9} and the Kirchberg--Phillips classification theorem \cite{Rordam-Stormer:Classification_Entropy}*{Theorem~8.4.1}. We may thus assume that $n>1$ and that the theorem holds for \Cstar{}algebras with fewer than~$n$ primitive ideals.

Let $\Acal$ look stably like a Cuntz--Krieger algebra. We may assume that $X\defeq\Prim(\Acal)$ is connected because otherwise $\Acal$ is the direct sum of two proper ideals. Let $x\in X$ be a point such that the subset~$\{x\}\subseteq X$ is closed. We apply the theorem to the restrictions~$r_X^{\{x\}} A$ and~$r_X^{X\setminus\{x\}} A$ and obtain square matrices~$\Asf_1$ and~$\Asf_2$ such that $\Prim(\Cuntz_{\Asf_1})\cong\{x\}$ and $\Prim(\Cuntz_{\Asf_2})\cong X\setminus\{x\}$ and such that $r_X^{\{x\}}\Acal\simeq\Cuntz_{\Asf_1}$ in $\KKcat(\{x\})$ and $r_X^{X\setminus\{x\}}\Acal\simeq\Cuntz_{\Asf_2}$ in $\KKcat(X\setminus\{x\})$. Moreover, it is ensured by \cite{Eilers-Katsura-Tomforde-West:Range_nonsimple_graph}*{Proposition~3.9} that we may choose $\Asf_1$ such that each diagonal entry is greater than~$1$. The adjunctions in \cite{Meyer-Nest:Bootstrap}*{Proposition~3.12} yield $\KK(X)$\nb-equivalences $P_{\{x\}}\Acal\simeq i_{\{x\}}^X\Cuntz_{\Asf_1}$ and $P_{X\setminus\{x\}}\Acal\simeq i_{X\setminus\{x\}}^X\Cuntz_{\Asf_2}$.

Since the \Cstar{}algebra~$\Acal(X)$ is nuclear, \cite{Meyer-Nest:Bootstrap}*{Proposition~4.10} shows that $\Acal$ belongs to the subclass $\KKcat(X)_\mathrm{loc}\subseteq\KKcat(X)$ defined in \cite{Meyer-Nest:Bootstrap}*{Definition~4.8}. Hence \cite{Meyer-Nest:Bootstrap}*{Proposition~4.7} implies that the extension $P_{X\setminus\{x\}}\Acal\rightarrowtail\Acal\twoheadrightarrow P_{\{x\}}\Acal$ of \Cstar{}algebras over~$X$ is admissible in the sense of \cite{Meyer-Nest:Bootstrap}*{Definition~3.10} and therefore induces an exact triangle in the category~$\KKcat(X)$.

Consider the following diagram in the category~$\KKcat(X)$:
\begin{equation}
\label{eq:morphism_of_triangles}
  \begin{tikzpicture}[baseline=(current bounding box.west)]
    \matrix(m)[cd,column sep=4em]{
      \Sigma P_{\{x\}}\Acal&P_{X\setminus\{x\}}\Acal&\Acal&P_{\{x\}}\Acal\\
      \Sigma i_{\{x\}}^X\Cuntz_{\Asf_1}&i_{X\setminus\{x\}}^X\Cuntz_{\Asf_2}& &i_{\{x\}}^X\Cuntz_{\Asf_1}.\\
    };
    \begin{scope}[cdar]
      \draw (m-1-1) -- (m-1-2);
      \draw (m-1-2) -- (m-1-3);
      \draw (m-1-3) -- (m-1-4);
      \draw (m-2-1) -- node {\(\alpha\)} (m-2-2);
      \draw (m-1-1) -- node {\(\simeq\)} (m-2-1);
      \draw (m-1-2) -- node {\(\simeq\)}(m-2-2);
      \draw (m-1-4) -- node {\(\simeq\)}(m-2-4);
    \end{scope}
  \end{tikzpicture}
\end{equation}
The upper row is the exact triangle induced by the extension $P_{X\setminus\{x\}}\Acal\rightarrowtail\Acal\twoheadrightarrow P_{\{x\}}\Acal$. The vertical arrows are arbitrarily chosen $\KK(X)$\nb-equivalences such that the leftmost arrow is the suspension of the rightmost one. The morphism~$\alpha$ is then chosen uniquely such that the left-hand square commutes; we wish to realize it by an extension over~$X$ whose underlying \Cstar{}algebra is a Cuntz--Krieger algebra. By the adjunction \cite{Meyer-Nest:Bootstrap}*{Proposition~3.13}, we have
\begin{multline*}
 \alpha\in\KK_0(X;\Sigma i_{\{x\}}^X\Cuntz_{\Asf_1},i_{X\setminus\{x\}}^X\Cuntz_{\Asf_2})
 \cong\KK_1(X;i_{\{x\}}^X\Cuntz_{\Asf_1},i_{X\setminus\{x\}}^X\Cuntz_{\Asf_2})\\
 \cong\KK_1\bigl(\Cuntz_{\Asf_1},(i_{X\setminus\{x\}}^X\Cuntz_{\Asf_2})(U_x)\bigr)
 \cong\KK_1\bigl(\Cuntz_{\Asf_1},\Cuntz_{\Asf_2}(U_x\setminus\{x\})\bigr)
\end{multline*}
where $U_x$ denotes the smallest open neighbourhood of~$x$ in~$X$. The ideal $\Cuntz_{\Asf_2}(U_x\setminus\{x\})$ of $\Cuntz_{\Asf_2}$ is isomorphic to $\Comp\otimes\Cuntz_{\Asf_2'}$ for an appropriately chosen submatrix~$\Asf_2'$ of~$\Asf_2$ by \cite{Cuntz:topological_Markov_chains_II}*{Theorem~2.5} and~\cite{Hjelmborg:Purely_inf_stable}*{Theorem~2.14}. Hence $\alpha$ corresponds to an element $\beta\in\KK_1\bigl(\Cuntz_{\Asf_1},\Cuntz_{\Asf_2'})\cong\Ext\bigl(\Cuntz_{\Asf_1},\Cuntz_{\Asf_2'})$ (see \cite{Blackadar:K-theory}*{Proposition 17.6.5}).

Since~$\Acal$ has real rank zero, the exponential map $\beta_*\colon\K_0(\Cuntz_{\Asf_1})\to\K_1(\Cuntz_{\Asf_2'})$ vanishes. By \cite{Cuntz:topological_Markov_chains_II}*{Theorem~3.11}, the class~$\beta$ belongs to a canonical subgroup of $\Ext\bigl(\Cuntz_{\Asf_1},\Cuntz_{\Asf_2'})$ isomorphic to $\Ext(\Cuntz_{\Asf_1})\otimes\K_0(\Cuntz_{\Asf_2'})$. We have $\Ext(\Cuntz_{\Asf_1})\cong\Coker(\Asf_1-\UNIT)$ by \cite{Cuntz-Krieger:topological_Markov_chains}*{Theorem~5.3} and $\K_0(\Cuntz_{\Asf_2'})\cong\Coker({\Asf_2'}^\trans-\UNIT)$ by \cite{Cuntz:topological_Markov_chains_II}*{Proposition~3.1}.

Let $\Nsf'\neq 0$ be a non-negative integer-valued matrix fitting into the block matrix
\[
 \Asf'\defeq\begin{pmatrix}\Asf_1&\Nsf'\\0&\Asf_2'\end{pmatrix}.
\]
This determines an extension of \Cstar{}algebras $\Ecal_{\Nsf'}=\left(\Comp\otimes\Cuntz_{\Asf_2'}\rightarrowtail\Cuntz_{\Asf'}\twoheadrightarrow\Cuntz_{\Asf_1}\right)$ by \cite{Cuntz:topological_Markov_chains_II}*{Theorem~2.5} and~\cite{Hjelmborg:Purely_inf_stable}*{Theorem~2.14}. If the size of~$\Nsf'$ is $m_1\times m_2'$ then \cite{Cuntz:topological_Markov_chains_II}*{Proposition~4.4} says that the class of~$\Ecal_{\Nsf'}$ in
\[
 \Coker(\Asf_1-\UNIT)\otimes\Coker({\Asf_2'}^\trans-\UNIT)
 \cong\Ext(\Cuntz_{\Asf_1})\otimes\K_0(\Cuntz_{\Asf_2'})
 \subseteq\Ext\bigl(\Cuntz_{\Asf_1},\Cuntz_{\Asf_2'})
\]
is given by regarding $\Nsf'$ as an element in $\Z^{m_1}\otimes\Z^{m_2'}$ in the obvious way and then applying the projection map $q\colon\Z^{m_1}\otimes\Z^{m_2'}\twoheadrightarrow\Coker(\Asf_1-\UNIT)\otimes\Coker({\Asf_2'}^\trans-\UNIT)$. Since we have arranged for the diagonal entries of~$\Asf_1$ to be at least~$2$, there exists a vector in the image of $\Asf_1-\UNIT$ with only positive entries. Hence $\Ker(q)$ contains a matrix with only positive entries. We may therefore choose a positive integer-valued matrix~$\Nsf'$ as above such that $\left[\Ecal_{\Nsf'}\right]=\beta\in\Ext\bigl(\Cuntz_{\Asf_1},\Cuntz_{\Asf_2'})$.

Now we define the matrix
\[
 \Asf\defeq\begin{pmatrix}\Asf_1&\Nsf\\0&\Asf_2\end{pmatrix},
\]
where $\Nsf$ is such that $\begin{pmatrix}\Nsf'\\\Asf_2'\end{pmatrix}$ is a submatrix of $\begin{pmatrix}\Nsf\\\Asf_2\end{pmatrix}$ and $\Nsf$ contains only vanishing columns besides the columns of~$\Nsf'$. Since the matrix~$\Nsf'$ is positive-valued, it follows from \cite{Cuntz:topological_Markov_chains_II}*{Theorem~2.5} that $\Prim(\Cuntz_\Asf)\cong X$. We obtain the extension
\[
\Ecal_\Nsf=\bigl(P_{X\setminus\{x\}}\Cuntz_{\Asf}\rightarrowtail\Cuntz_{\Asf}\twoheadrightarrow P_{\{x\}}\Cuntz_{\Asf}\bigr)
\]
of \Cstar{}algebras over~$X$. The quotient $P_{\{x\}}\Cuntz_{\Asf}$ is isomorphic over~$X$ to $i_{\{x\}}^X\Cuntz_{\Asf_1}$ because of $r_X^{\{x\}}\Cuntz_{\Asf}\cong\Cuntz_{\Asf_1}$ and \cite{Meyer-Nest:Bootstrap}*{Lemma~2.20}. Similarly, $P_{X\setminus\{x\}}\Cuntz_{\Asf}$ is stably isomorphic over~$X$ to $i_{X\setminus\{x\}}^X\Cuntz_{\Asf_2}$ because $r_X^{X\setminus\{x\}}\Cuntz_{\Asf}$ is stably isomorphic over~$X\setminus\{x\}$ to~$\Cuntz_{\Asf_2}$. To see the latter, recall from the proof of \cite{Cuntz:topological_Markov_chains_II}*{Theorem~2.5} that $\Cuntz_{\Asf_2}$ is a full corner in $\Cuntz_\Asf(X\setminus\{x\})$; the inclusion map provides a stable isomorphism over~$X\setminus\{x\}$. 

\begin{lemma}
Under the isomorphism 
\[
\KK_1(X;P_{\{x\}}\Cuntz_{\Asf},P_{X\setminus\{x\}}\Cuntz_{\Asf})
\cong\KK_1(X;i_{\{x\}}^X\Cuntz_{\Asf_1},i_{X\setminus\{x\}}^X\Cuntz_{\Asf_2})
\]
resulting from the idenfications above, the classes~$\left[\Ecal_\Nsf\right]$ and~$\alpha$ correspond to each other.
\end{lemma}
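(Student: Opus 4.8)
The plan is to identify the class~$\left[\Ecal_\Nsf\right]$, which by construction is the connecting morphism of the exact triangle induced by the admissible extension~$\Ecal_\Nsf$ of \Cstar{}algebras over~$X$, by tracing it through the same chain of natural isomorphisms that was used above to pass from~$\alpha$ to~$\beta\in\Ext\bigl(\Cuntz_{\Asf_1},\Cuntz_{\Asf_2'}\bigr)$, and to show that this chain carries~$\left[\Ecal_\Nsf\right]$ to~$\beta=\left[\Ecal_{\Nsf'}\right]$ as well. Since the isomorphism displayed in the lemma is precisely the first step of that chain applied to~$\left[\Ecal_\Nsf\right]$, while all subsequent steps coincide with the isomorphisms applied to~$\alpha$, the matching of~$\left[\Ecal_\Nsf\right]$ with~$\alpha$ then follows from injectivity.

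First I would transport~$\left[\Ecal_\Nsf\right]$ into $\KK_1(X;i_{\{x\}}^X\Cuntz_{\Asf_1},i_{X\setminus\{x\}}^X\Cuntz_{\Asf_2})$ by means of the isomorphism $P_{\{x\}}\Cuntz_\Asf\cong i_{\{x\}}^X\Cuntz_{\Asf_1}$ and the stable isomorphism between $P_{X\setminus\{x\}}\Cuntz_\Asf$ and $i_{X\setminus\{x\}}^X\Cuntz_{\Asf_2}$ over~$X$ fixed above. The suspension isomorphism and the adjunction isomorphisms of \cite{Meyer-Nest:Bootstrap}*{Proposition~3.13} are natural and compatible with the triangulated structure; being induced by the exact functor~$r_X^{U_x}$ and suitable units, they send the class of an admissible extension over~$X$ to the class of its image under~$r_X^{U_x}$. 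Hence they carry~$\left[\Ecal_\Nsf\right]$ to the class of
\[
 r_X^{U_x}\Ecal_\Nsf=\bigl(\Cuntz_\Asf(U_x\setminus\{x\})\rightarrowtail\Cuntz_\Asf(U_x)\twoheadrightarrow r_X^{\{x\}}\Cuntz_\Asf\bigr),
\]
viewed in $\Ext\bigl(\Cuntz_{\Asf_1},\Cuntz_{\Asf_2'}\bigr)$ through the identification $r_X^{\{x\}}\Cuntz_\Asf\cong\Cuntz_{\Asf_1}$ and through the stable isomorphism of $\Cuntz_\Asf(U_x\setminus\{x\})$ with $\Cuntz_{\Asf_2}(U_x\setminus\{x\})\cong\Comp\otimes\Cuntz_{\Asf_2'}$ obtained by restricting the full-corner embedding $\Cuntz_{\Asf_2}\hookrightarrow\Cuntz_\Asf(X\setminus\{x\})$ to the ideals over $U_x\setminus\{x\}$.

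It then remains to recognise $r_X^{U_x}\Ecal_\Nsf$ as the extension~$\Ecal_{\Nsf'}$. Here I would exploit the block structure of~$\Asf$: since $\Nsf$ has no nonzero column outside the columns of~$\Nsf'$, and since $\Asf_2'$ is the submatrix of~$\Asf_2$ on the hereditary saturated vertex set corresponding to the open subset $U_x\setminus\{x\}$ of $\Prim(\Cuntz_{\Asf_2})$, the index set of~$\Asf_1$ together with that of~$\Asf_2'$ forms a hereditary saturated subset for~$\Asf$; it corresponds to the open subset~$U_x$ of~$X$, and the submatrix of~$\Asf$ it determines is exactly~$\Asf'$. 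Applying \cite{Cuntz:topological_Markov_chains_II}*{Theorem~2.5} and \cite{Hjelmborg:Purely_inf_stable}*{Theorem~2.14} along the inclusions of the hereditary saturated vertex sets of~$\Asf_2'$, of~$\Asf'$ and of~$\Asf$ then shows that, up to stable isomorphisms agreeing with those already in play, the extension $r_X^{U_x}\Ecal_\Nsf$ is~$\Ecal_{\Nsf'}$; in particular its class equals $\left[\Ecal_{\Nsf'}\right]=\beta$. As $\beta$ is also the image of~$\alpha$ under the same chain, the lemma follows.

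The step I expect to be the main obstacle is this final compatibility: one must verify that \cite{Cuntz:topological_Markov_chains_II}*{Theorem~2.5} can be applied functorially along the inclusions of hereditary saturated vertex sets, so that the full-corner embeddings and stabilisations it furnishes assemble into a single commuting diagram together with the identifications $\Cuntz_{\Asf_2}(U_x\setminus\{x\})\cong\Comp\otimes\Cuntz_{\Asf_2'}$, $\Cuntz_{\Asf_2}\hookrightarrow\Cuntz_\Asf(X\setminus\{x\})$ and $P_{\{x\}}\Cuntz_\Asf\cong i_{\{x\}}^X\Cuntz_{\Asf_1}$ used earlier. Once that diagram is in place, the identification of the two classes is immediate from the naturality of the isomorphisms involved.
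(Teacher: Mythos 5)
Your proposal is correct and follows essentially the same route as the paper: the paper organizes the argument as a commutative diagram of isomorphisms (adjunction maps vertically, corner-embedding-induced maps horizontally), traces $\left[\Ecal_\Nsf\right]$ down to the class of the restricted extension $\Cuntz_{\Asf}(U_x\setminus\{x\})\rightarrowtail\Cuntz_{\Asf}(U_x)\twoheadrightarrow\Cuntz_{\Asf}(\{x\})$, identifies it with $\beta=\left[\Ecal_{\Nsf'}\right]$ via the commuting square of corner embeddings relating $\Cuntz_\Asf$, $\Cuntz_{\Asf'}$, $\Cuntz_{\Asf_2}$ and $\Cuntz_{\Asf_2'}$, and concludes by the commutativity you invoke as injectivity. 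The compatibility you flag as the main obstacle is exactly what the paper handles with that square of corner embeddings.
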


\begin{proof}
We have the following commutative diagram of group isomorphisms:
\begin{equation*}
  \begin{tikzpicture}[baseline=(current bounding box.west)]
    \matrix(m)[cd,column sep=4em]{
      \left[\Ecal_\Nsf\right]\in\KK_1(X;P_{\{x\}}\Cuntz_{\Asf},P_{X\setminus\{x\}}\Cuntz_{\Asf})&
      \KK_1(X;i_{\{x\}}^X\Cuntz_{\Asf_1},i_{X\setminus\{x\}}^X\Cuntz_{\Asf_2})\ni\alpha\\
      \KK_1\bigl(\Cuntz_{\Asf}(\{x\}),\Cuntz_{\Asf}(U_x\setminus\{x\})&
      \KK_1\bigl(\Cuntz_{\Asf_1},\Cuntz_{\Asf_2}(U_x\setminus\{x\})\bigl)\\
      \KK_1\bigl(\Cuntz_{\Asf'}(\{x\}),\Cuntz_{\Asf'}(U_x\setminus\{x\})\bigl)&
      \KK_1(\Cuntz_{\Asf_1},\Cuntz_{\Asf_2'})\ni\beta.\\
    };
    \begin{scope}[cdar]
      \draw (m-1-2) -- (m-1-1);
      \draw (m-2-2) -- (m-2-1);
      \draw (m-3-2) -- (m-3-1);
      \draw (m-1-1) -- (m-2-1);
      \draw (m-1-2) -- (m-2-2);
      \draw (m-3-1) -- (m-2-1);
      \draw (m-3-2) -- (m-2-2);
    \end{scope}
  \end{tikzpicture}
\end{equation*}
In the top square, the vertical arrows come from the adjunction \cite{Meyer-Nest:Bootstrap}*{Proposition~3.13}, and the horizontal arrows are induced by the isomorphism $\Cuntz_\Asf(\{x\})\cong\Cuntz_{\Asf_1}$ and the full corner embedding $\Cuntz_{\Asf_2}\hookrightarrow\Cuntz_\Asf(X\setminus\{x\})$; the square commutes by naturality.

The bottom square is induced by the commuting square of corner embeddings
\begin{equation*}
  \begin{tikzpicture}[baseline=(current bounding box.west)]
    \matrix(m)[cd,column sep=4em]{
      \Cuntz_{\Asf}&
      \Cuntz_{\Asf_2}\\
      \Cuntz_{\Asf'}&
      \Cuntz_{\Asf_2'}\\
    };
    \begin{scope}[cdar]
      \draw[>->] (m-1-2) -- (m-1-1);
      \draw[>->] (m-2-2) -- (m-2-1);
      \draw[>->] (m-2-1) -- (m-1-1);
      \draw[>->] (m-2-2) -- (m-1-2);
    \end{scope}
  \end{tikzpicture}
\end{equation*}
and the isomorphisms $\Cuntz_{\Asf'}(\{x\})\cong\Cuntz_\Asf(\{x\})\cong\Cuntz_{\Asf_1}$.

Mapping the class of the extension~$\Ecal_\Nsf$ towards the bottom, we obtain the classes
$\bigl[\Cuntz_{\Asf}(U_x\setminus\{x\})\rightarrowtail\Cuntz_{\Asf}(U_x)\twoheadrightarrow\Cuntz_{\Asf}(\{x\})\bigr]\in\KK_1\bigl(\Cuntz_{\Asf}(\{x\}),\Cuntz_{\Asf}(U_x\setminus\{x\})\bigr)$ and
\[
\bigl[\Cuntz_{\Asf'}(U_x\setminus\{x\})\rightarrowtail\Cuntz_{\Asf'}\twoheadrightarrow\Cuntz_{\Asf'}(\{x\})\bigr]\in\KK_1\bigl(\Cuntz_{\Asf'}(\{x\}),\Cuntz_{\Asf'}(U_x\setminus\{x\})\bigr).
\]
Translating further along the inverse of the bottom arrow yields precisely the desired class~$\beta$ of the extension $\left(\Comp\otimes\Cuntz_{\Asf_2'}\rightarrowtail\Cuntz_{\Asf'}\twoheadrightarrow\Cuntz_{\Asf_1}\right)$. It follows that the top arrow maps~$\alpha$ to~$\left[\Ecal_\Nsf\right]$.
\end{proof}

Let us complete the proof of Theorem~\ref{thm:no_phantoms}. Using the lemma, we may modify \eqref{eq:morphism_of_triangles} to obtain the following commutative diagram in~$\KKcat(X)$ whose rows are exact triangles:
\begin{equation*}
  \begin{tikzpicture}[baseline=(current bounding box.west)]
    \matrix(m)[cd,column sep=4em]{
      \Sigma P_{\{x\}}\Acal&P_{X\setminus\{x\}}\Acal&\Acal&P_{\{x\}}\Acal\\
      \Sigma P_{\{x\}}\Cuntz_{\Asf}&P_{X\setminus\{x\}}\Cuntz_{\Asf}&\Cuntz_\Asf &P_{\{x\}}\Cuntz_{\Asf}.\\
    };
    \begin{scope}[cdar]
      \draw (m-1-1) -- (m-1-2);
      \draw (m-1-2) -- (m-1-3);
      \draw (m-1-3) -- (m-1-4);
      \draw (m-2-1) -- node {\(\left[\Ecal_{\Nsf}\right]\)} (m-2-2);
      \draw (m-2-2) -- (m-2-3);
      \draw (m-2-3) -- (m-2-4);
      \draw (m-1-1) -- node {\(\simeq\)} (m-2-1);
      \draw (m-1-2) -- node {\(\simeq\)}(m-2-2);
      \draw (m-1-4) -- node {\(\simeq\)}(m-2-4);
    \end{scope}
  \end{tikzpicture}
\end{equation*}
It follows from an axiom of triangulated categories and the Five Lemma for triangulated categories that $\Acal$ and~$\Cuntz_\Asf$ are $\KK(X)$-equiv\-a\-lent and thus stably isomorphic by Kirchberg's classification theorem~\cite{Kirchberg:Michael}*{Folgerung~4.3}.

Finally, if $\Acal$ is unital then it is a Cuntz--Krieger algebra by~\cite{Arklint-Ruiz:Corners_of_CK_algs}*{Theorem~4.8(1)}. This concludes the proof of Theorem~\ref{thm:no_phantoms}. \hspace{\stretch1}\qedsymbol

\begin{bibdiv}
  \begin{biblist}
  
\bib{Arklint:PhantomCK}{article}{
  author={Arklint, Sara E.},
  title={Do phantom Cuntz--Krieger algebras exist?},
  conference={
    title={Operator algebra and dynamics},
  },
  book={
    series={Springer Proc. Math. Stat.},
    volume={58},
    publisher={Springer},
    place={Heidelberg},
  },
  date={2013},
  pages={31--40},
  review={\MRref{3142030}{}},
  doi={10.1007/978-3-642-39459-1_2},
}

\bib{Arklint-Bentmann-Katsura:Reduction}{article}{
  author={Arklint, Sara E.},
  author={Bentmann, Rasmus},
  author={Katsura, Takeshi},
  title={Reduction of filtered K-theory and a characterization of Cuntz--Krieger algebras},
  journal={J. K-Theory},
  volume={14},
  date={2014},
  number={3},
  pages={570--613},
  issn={1865-2433},
  review={\MRref{3349327}{}},
  doi={10.1017/is014009013jkt281},
}

\bib{Arklint-Bentmann-Katsura:Range}{article}{
  author={Arklint, Sara E.},
  author={Bentmann, Rasmus},
  author={Katsura, Takeshi},
  title={The K-theoretical range of Cuntz--Krieger algebras},
  journal={J. Funct. Anal.},
  volume={266},
  date={2014},
  number={8},
  pages={5448--5466},
  issn={0022-1236},
  review={\MRref{3177344}{}},
  doi={10.1016/j.jfa.2014.01.020},
}

\bib{Arklint-Ruiz:Corners_of_CK_algs}{article}{
   author={Arklint, Sara E.},
   author={Ruiz, Efren},
   title={Corners of Cuntz-Krieger algebras},
   journal={Trans. Amer. Math. Soc.},
   volume={367},
   date={2015},
   number={11},
   pages={7595--7612},
   issn={0002-9947},
   review={\MRref{3391894}{}},
   doi={10.1090/S0002-9947-2015-06283-7},
}

\bib{Bates-Pask:Flow_eq_for_graph_algs}{article}{
  author={Bates, Teresa},
  author={Pask, David},
  title={Flow equivalence of graph algebras},
  journal={Ergodic Theory Dynam. Systems},
  volume={24},
  date={2004},
  number={2},
  pages={367--382},
  issn={0143-3857},
  review={\MRref{2054048}{2004m:37019}},
  doi={10.1017/S0143385703000348},
}

\bib{Bentmann:Real_rank_zero_and_int_cancellation}{article}{
  author={Bentmann, Rasmus},
  title={Kirchberg $X$\nobreakdash-algebras with real rank zero and intermediate cancellation},
  journal={J. Noncommut. Geom.},
  volume={8},
  date={2014},
  number={4},
  pages={1061--1081},
  issn={1661-6952},
  review={\MRref{3310939}{}},
  doi={10.4171/JNCG/178},
}

\bib{Blackadar:K-theory}{book}{
  author={Blackadar, Bruce},
  title={\(K\)\nobreakdash-Theory for Operator Algebras},
  series={Mathematical Sciences Research Institute Publications},
  volume={5},
  edition={2},
  publisher={Cambridge University Press},
  place={Cambridge},
  date={1998},
  pages={xx+300},
  isbn={0-521-63532-2},
  review={\MRref{1656031}{99g:46104}},
}

\bib{Cuntz:O_n}{article}{
  author={Cuntz, Joachim},
  title={Simple $C^*$\nobreakdash-algebras generated by isometries},
  journal={Comm. Math. Phys.},
  volume={57},
  date={1977},
  number={2},
  pages={173--185},
  issn={0010-3616},
  review={\newline\MRref{0467330}{57 \#7189}},
  eprint={http://projecteuclid.org/euclid.cmp/1103901288},
}

\bib{Cuntz:topological_Markov_chains_II}{article}{
  author={Cuntz, Joachim},
  title={A class of $C^*$\nobreakdash-algebras and topological Markov chains.   II. Reducible chains and the Ext-functor for $C^*$\nobreakdash-algebras},
  journal={Invent. Math.},
  volume={63},
  date={1981},
  number={1},
  pages={25--40},
  issn={0020-9910},
  review={\MRref{608527}{82f:46073b}},
  doi={10.1007/BF01389192},
}

\bib{Cuntz-Krieger:topological_Markov_chains}{article}{
  author={Cuntz, Joachim},
  author={Krieger, Wolfgang},
  title={A class of $C^*$\nobreakdash-algebras and topological Markov chains},
  journal={Invent. Math.},
  volume={56},
  date={1980},
  number={3},
  pages={251--268},
  issn={0020-9910},
  review={\MRref{561974}{82f:46073a}},
  doi={10.1007/BF01390048},
}  

\bib{Eilers-Katsura-Tomforde-West:Range_nonsimple_graph}{article}{
  author={Eilers, S\o ren},
  author={Katsura, Takeshi},
  author={Tomforde, Mark},
  author={West, James},
  title={The ranges of $K$\nobreakdash-theoretic invariants for non-simple graph $C^*$\nobreakdash-algebras},
  note={\arxiv{1202.1989}},
  date={2012},
}

\bib{Hjelmborg:Purely_inf_stable}{article}{
  author={Hjelmborg, Jacob v. B.},
  title={Purely infinite and stable $C^*$\nobreakdash-algebras of graphs and dynamical systems},
  journal={Ergodic Theory Dynam. Systems},
  volume={21},
  date={2001},
  number={6},
  pages={1789--1808},
  issn={0143-3857},
  review={\MRref{1869070}{2002h:46112}},
  doi={10.1017/S0143385701001857},
}

\bib{Kirchberg:Michael}{article}{
  author={Kirchberg, Eberhard},
  title={Das nicht-kommutative Michael-Auswahlprinzip und die Klassifikation nicht-einfacher Algebren},
  pages={92--141},
  book={
    title={\(C^*\)-Algebras (M\"unster, 1999)},
    publisher={Springer},
    place={Berlin},
    date={2000},
  },
  review={\MRref{1796912}{2001m:46161}},
}

\bib{Kumjian-Pask-Raeburn:Cuntz-Krieger_graphs}{article}{
  author={Kumjian, Alex},
  author={Pask, David},
  author={Raeburn, Iain},
  title={Cuntz--Krieger algebras of directed graphs},
  journal={Pacific J. Math.},
  volume={184},
  date={1998},
  number={1},
  pages={161--174},
  issn={0030-8730},
  review={\MRref{1626528}{99i:46049}},
  doi={10.2140/pjm.1998.184.161},
}

\bib{Kumjian-Pask-Raeburn-Renault:Graphs}{article}{
  author={Kumjian, Alex},
  author={Pask, David},
  author={Raeburn, Iain},
  author={Renault, Jean},
  title={Graphs, groupoids, and Cuntz--Krieger algebras},
  journal={J. Funct. Anal.},
  volume={144},
  date={1997},
  number={2},
  pages={505--541},
  issn={0022-1236},
  review={\MRref{1432596}{98g:46083}},
  doi={10.1006/jfan.1996.3001},
}

\bib{Meyer-Nest:Bootstrap}{article}{
  author={Meyer, Ralf},
  author={Nest, Ryszard},
  title={\(C^*\)\nobreakdash-Algebras over topological spaces: the bootstrap class},
  journal={M\"unster J. Math.},
  volume={2},
  date={2009},
  pages={215--252},
  issn={1867-5778},
  review={\MRref{2545613}{2011a:46105}},
  eprint={http://nbn-resolving.de/urn:nbn:de:hbz:6-10569452982},
}

\bib{Pasnicu-Rordam:Purely_inf_RR0}{article}{
   author={Pasnicu, Cornel},
   author={R{\o}rdam, Mikael},
   title={Purely infinite $C^*$-algebras of real rank zero},
   journal={J. Reine Angew. Math.},
   volume={613},
   date={2007},
   pages={51--73},
   issn={0075-4102},
   review={\MRref{2377129}{ (2009b:46119)}},
   doi={10.1515/CRELLE.2007.091},
}

\bib{Raeburn:Graph_algebras}{book}{
  author={Raeburn, Iain},
  title={Graph algebras},
  series={CBMS Regional Conference Series in Mathematics},
  volume={103},
  publisher={Amer. Math. Soc.},
  place={Providence, RI},
  date={2005},
  pages={vi+113},
  isbn={0-8218-3660-9},
  review={\MRref{2135030}{2005k:46141}},
}

\bib{Restorff:Classification}{article}{
  author={Restorff, Gunnar},
  title={Classification of Cuntz--Krieger algebras up to stable isomorphism},
  journal={J. Reine Angew. Math.},
  volume={598},
  date={2006},
  pages={185--210},
  issn={0075-4102},
  review={\MRref{2270572}{2007m:46090}},
  doi={10.1515/CRELLE.2006.074},
}

\bib{Rordam-Stormer:Classification_Entropy}{collection}{
  author={R\o rdam, Mikael},
  author={St\o rmer, Erling},
  title={Classification of nuclear $C^*$\nobreakdash-algebras. Entropy in operator algebras},
  series={Encyclopaedia of Mathematical Sciences},
  volume={126},
  note={Operator Algebras and Non-commutative Geometry, 7},
  publisher={Springer},
  place={Berlin},
  date={2002},
  pages={x+198},
  isbn={3-540-42305-X},
  review={\MRref{1878881}{2002i:46047}},
  doi={10.1007/978-3-662-04825-2},
}

\bib{Toms-Winter:Strongly_self-abs}{article}{
   author={Toms, Andrew S.},
   author={Winter, Wilhelm},
   title={Strongly self-absorbing $C^*$-algebras},
   journal={Trans. Amer. Math. Soc.},
   volume={359},
   date={2007},
   number={8},
   pages={3999--4029},
   issn={0002-9947},
   review={\MRref{2302521}{ (2008c:46086)}},
   doi={10.1090/S0002-9947-07-04173-6},
}

  \end{biblist}
\end{bibdiv}
\end{document}